\newtheorem{theorem}{Theorem}[section]
\newtheorem{lem}[theorem]{Lemma}
\newtheorem{thm}[theorem]{Theorem}
\DeclareMathOperator{\GL}{GL}
\DeclareMathOperator{\SL}{SL}
\newcommand{\N}{\mathbb{N}} 
\newcommand{\F}{\mathbb{F}} 
\newcommand{\ra}{\longrightarrow} 
\newcommand{\A}{\mathcal{A}} 
\newcommand{\B}{\mathcal{B}} 
\newcommand{\LL}{\mathcal{L}} 
\newcommand{\C}{\mathcal{C}} 
\newcommand{\NN}{\mathcal{N}} 
\newcommand{\I}{\mathcal{I}}
\begin{document}
\title{Relations between modular invariants of a vector and a covector in dimension two} 
\def\shorttitle{Relations between modular invariants of a vector and a covector in dimension two}

\author{Yin Chen}
\address{School of Mathematics and Statistics, Northeast Normal University, Changchun 130024, China \& Department of Mathematics and Statistics, Queen's University, Kingston, K7L 3N6, Canada}
\email{ychen@nenu.edu.cn}

\begin{abstract}
We exhibit a set of generating relations for the modular invariant ring of a vector and a covector for the two-dimensional  general linear group over a finite field.
\end{abstract}

\date{\today}
\subjclass[2010]{13A50.}
\keywords{Modular invariants; general linear groups; finite fields.}
\maketitle \baselineskip=15pt

\dottedcontents{section}[1.16cm]{}{1.8em}{5pt}
\dottedcontents{subsection}[2.00cm]{}{2.7em}{5pt}

\vspace{-4mm}
\section{Introduction}
\setcounter{equation}{0}
\renewcommand{\theequation}
{1.\arabic{equation}}
\setcounter{theorem}{0}
\renewcommand{\thetheorem}
{1.\arabic{theorem}}

\noindent Let $\F_q$ be a finite field of order $q=p^s (s\in\N^+)$ and $V$ be an $n$-dimensional vector space over $\F_q$.
Let $\GL(V)$ be the general linear group on $V$ and $V^*$ be the dual space of $V$.  Suppose $G\leqslant \GL(V)$ is a subgroup. The diagonal action of $G$ on $V\oplus V^*$ algebraically extends  to a degree-preserving $\F_q$-automorphism action on the symmetric algebra $\F_q[V\oplus V^*]$. Choosing a basis $\{y_1,\dots,y_n\}$ of $V$ and a basis $\{x_1,\dots,x_n\}$ for $V^*$ dual to $\{y_1,\dots.y_n\}$, we may identify $\GL(V)$ with $\GL_n(\F_q)$ and identify the symmetric algebra $\F_q[V\oplus V^*]$ with the polynomial algebra $\F_q[x_1,\dots,x_n,y_1,\dots,y_n]$ respectively. The invariant ring $\F_q[V\oplus V^*]^{G}$
consisting of all polynomials in $\F_q[V\oplus V^*]$ fixed by every element of $G$ is the main object of study in the invariant theory of a vector and a covector.

Giving a presentation via generators and relations for an invariant ring $\F_q[V\oplus V^*]^{G}$ is a fundamental and difficult task. 
When $G=U_n(\F_q)$, the unipotent group of upper triangular matrices with 1’s on the diagonal, 
\cite[Theorem 2.4]{BK11} has proved that $\F_q[V\oplus V^*]^{U_n(\F_q)}$ is a complete intersection 
via presenting a minimal generating set for the invariant ring and generating relations among these generators. 
A minimal generating set for $\F_q[V\oplus V^*]^{\GL(V)}$ was conjectured by \cite[Conjecture 3.1]{BK11} and recently
was confirmed by \cite[Theorem 1]{CW17}.
In \cite[Section 12]{CW17}, we also give a conjectural set of generating relations between
these generators of $\F_q[V\oplus V^*]^{\GL(V)}$.

The purpose of this short note is to confirm Conjecture 16 appeared in \cite{CW17} for 
the special case $n=2$ where there are seven generators for the invariant ring and it was originally conjectured that 
five generating relations appear. Note that the generating relations  in \cite[Conjecture 16]{CW17} were not all given explicitly, even in the case $n=2$.

In Section \ref{sec2}, we recall the minimal generating set for $\F_q[V\oplus V^*]^{\GL_2(\F_q)}$ and present the three generating relations that appeared in \cite{CW17} previously. Section \ref{sec3} contains constructions of two new relations for the invariant ring. In Section \ref{sec4}, we show that any relations for $\F_q[V\oplus V^*]^{\GL_2(\F_q)}$ can be generated by  these five relations; see Theorem \ref{stm}. Section \ref{sec5} consists of three lemmas that will be 
necessary to the proof of Theorem \ref{stm}.

\subsection*{Acknowledgements} The author thank the anonymous referee for a careful reading of the manuscript and for  constructive corrections and suggestions.





\section{$\F_q[V\oplus V^*]^{\GL_2(\F_q)}$}\label{sec2}
\setcounter{equation}{0}
\renewcommand{\theequation}
{2.\arabic{equation}}
\setcounter{theorem}{0}
\renewcommand{\thetheorem}
{2.\arabic{theorem}}

\noindent In this preliminary  section, we recall the minimal generating set for $\F_q[V\oplus V^*]^{\GL_2(\F_q)}$ and some relations appeared in \cite{CW17}.
Let $\F_q[V\oplus V^*]=\F_q[x_1,x_2,y_1,y_2]$ be the polynomial ring with an algebraic involution $\ast$ given by $x_1\mapsto y_2,x_2\mapsto y_1,y_1\mapsto x_2$ and $y_2\mapsto x_1$. We regard $\F_q[V]=\F_q[x_1,x_2]$ and $\F_q[V^*]=\F_q[y_1,y_2]$ as subalgebras of $\F_q[V\oplus V^*]$. We define
$$d_0:=\det\begin{pmatrix}
     x_1^q &x_2^q    \\
     x_1^{q^2} &x_2^{q^2}
\end{pmatrix},d_1:=\det\begin{pmatrix}
     x_1 &x_2   \\
     x_1^{q^2} &x_2^{q^2}
\end{pmatrix},d_2:=\det\begin{pmatrix}
     x_1 &x_2   \\
     x_1^{q} &x_2^{q}
\end{pmatrix}$$
and $c_0:=d_0/d_2=d_2^{q-1},c_1:=d_1/d_2$. Denote by $f^*$ the image of an element $f\in \F_q[V\oplus V^*]$ under the involution $*$.
Then $\F_q[V]^{\GL_2(\F_q)}=\F_q[c_0,c_1]$ and $\F_q[V^*]^{\GL_2(\F_q)}=\F_q[c_0^*,c_1^*]$. We define 
\begin{eqnarray*}
u_{-i}:= x_{1}y_{1}^{q^i}+x_{2}y_{2}^{q^i},&
u_{0}:= x_{1}y_{1}+x_{2}y_{2},&
u_{i}:= x_{1}^{q^i}y_{1}+x_{2}^{q^i}y_{2}
\end{eqnarray*}
for $i\in \N^+$.
We have seen that $\F_{q}[V\oplus V^{*}]^{\GL_{2}(\F_{q})}$ is generated minimally by $\{c_{0},c_{1},c_{0}^{*},c_{1}^{*},u_{-1},u_{0},u_{1}\}$; see \cite[Theorem 1]{CW17}.

To find all relations among the above seven  generators of $\F_{q}[V\oplus V^{*}]^{\GL_{2}(\F_{q})}$, we start with the following initial relation:
\begin{equation}\tag{$T_0$}
\label{T0}
c_{0}\cdot u_{0}-c_{1}\cdot u_{1}+u_{2} =0.
\end{equation}
The Frobenious map $F^*:\F_{q}[V\oplus V^{*}]\ra \F_{q}[V\oplus V^{*}]$ defined by $x_i\mapsto x_i$ and $y_i\mapsto y_i^q$ applies to (\ref{T0}) and we obtain the first relation:
\begin{equation}\tag{$T_1$}
\label{T1}
c_{0}\cdot u_{-1}-c_{1}\cdot u_{0}^{q}+u_{1}^{q} =0.
\end{equation}
Note that $u_{-i}^*=u_i$ for $i\in\N$. Applying the involution $*$ to (\ref{T1}) obtains the second one:
\begin{equation}\tag{$T_1^*$}
\label{TT1}
c_{0}^*\cdot u_{1}-c_{1}^*\cdot u_{0}^{q}+u_{-1}^{q} =0.
\end{equation}
Note that $d_2$ and $d_2^*$ are not $\GL_2(\F_q)$-invariants, but their product $d_2\cdot d_2^*$  belongs to
$\F_{q}[V\oplus V^{*}]^{\GL_{2}(\F_{q})}$. In fact, a direct computation verifies that
\begin{equation}\tag{$K_{00}$}
\label{K00}
d_{2}\cdot d_{2}^*=u_{-1}\cdot u_{1}-u_{0}^{q+1},
\end{equation}
and from which, we obtain  the third relation:
\begin{equation}\tag{$T_{00}$}
\label{T00}
c_{0}\cdot c_{0}^{*}-(u_{-1}\cdot u_{1}-u_{0}^{q+1})^{q-1}=0.
\end{equation}
These relations (\ref{T0}), (\ref{T1}), (\ref{TT1}), (\ref{K00}), and (\ref{T00}) are taken from \cite[Section 6]{CW17}.

\section{$\SL_2(\F_q)$-Invariants and More Relations} \label{sec3}
\setcounter{equation}{0}
\renewcommand{\theequation}
{3.\arabic{equation}}
\setcounter{theorem}{0}
\renewcommand{\thetheorem}
{3.\arabic{theorem}}

\noindent This section reveals two new relations by introducing  a couple of  $\SL_2(\F_q)$-invariants. For $0\leqslant s\leqslant q-1$, we define 
\begin{equation}
\label{ }
h_{s}:=\frac{u_{1}^{s+1}\cdot(d_{2}^{*})^{q-1-s}+u_{-1}^{q-s}\cdot d_{2}^{s}}{u_{0}^{q}}.
\end{equation}
We observe that $h_s^*=h_{q-1-s}$. In particular, $h_{0}=c_{1}^{*},h_{q-1}=c_{1}$ and each $h_{s}\in \F_{q}[V\oplus V^{*}]^{\SL_{2}(\F_{q})}$; see \cite[Lemma 2.6]{Che14}.
We also have seen in \cite[Lemma 2.7]{Che14} that 
\begin{equation}\tag{$R_s$}
\label{Rs}
h_s\cdot u_1=u_0\cdot u_{-1}^{q-1-s}\cdot d_2^s+d_2^*\cdot h_{s+1}
\end{equation}
for $0\leqslant s\leqslant q-2$. Moreover,

\begin{lem}
For $1\leqslant s\leqslant q-1$, we have 
\begin{equation}\tag{$K_s$}
\label{Ks}
h_{s}\cdot d_{2}^{*s}=c_{1}^{*}\cdot u_{1}^{s}+u_{-1}^{q-s}\cdot u_{0}\cdot\sum_{i=1}^s (-1)^i{s\choose i} (u_{-1}\cdot u_{1})^{s-i}(u_{0}^{q+1})^{i-1}.
\end{equation}
Applying the involution $*$ on (\ref{Ks}), we have
\begin{equation}\tag{$K_s^*$}
\label{KKs}
h_{q-1-s}\cdot d_{2}^{s}=c_{1}\cdot u_{-1}^{s}+u_{0}\cdot u_{1}^{q-s}\cdot\sum_{i=1}^s (-1)^i{s\choose i} (u_{-1}\cdot u_{1})^{s-i}(u_{0}^{q+1})^{i-1}.
\end{equation}
\end{lem}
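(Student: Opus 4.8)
The plan is to prove the identity $(K_s)$ by induction on $s$, using the recursion $(R_s)$ as the inductive engine, and then to derive $(K_s^*)$ formally by applying the involution $\ast$. For the base case $s=1$, note that $h_1\cdot d_2^*$ should equal $c_1^*\cdot u_1 + u_{-1}^{q-1}\cdot u_0\cdot(-1)\binom{1}{1}(u_{-1}u_1)^0(u_0^{q+1})^0 = c_1^*\cdot u_1 - u_{-1}^{q-1}\cdot u_0$; this can be checked directly from the defining formula $h_1 = \bigl(u_1^{2}(d_2^*)^{q-2} + u_{-1}^{q-1}d_2\bigr)/u_0^{q}$ together with $(K_{00})$, i.e. $d_2 d_2^* = u_{-1}u_1 - u_0^{q+1}$, and the relation $(R_0)$ specialized appropriately (recalling $h_0 = c_1^*$).

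For the inductive step, I would assume $(K_s)$ holds and aim to establish $(K_{s+1})$. Multiply $(R_s)$, which reads $h_s\cdot u_1 = u_0\cdot u_{-1}^{q-1-s}\cdot d_2^s + d_2^*\cdot h_{s+1}$, by $d_2^{*s}$ to obtain $h_s\cdot d_2^{*s}\cdot u_1 = u_0\cdot u_{-1}^{q-1-s}\cdot (d_2 d_2^*)^s + d_2^{*(s+1)}\cdot h_{s+1}$. Substituting the inductive hypothesis for $h_s\cdot d_2^{*s}$ on the left, and using $(K_{00})$ to rewrite $(d_2 d_2^*)^s = (u_{-1}u_1 - u_0^{q+1})^s$ on the right, we get an expression for $d_2^{*(s+1)}\cdot h_{s+1}$ purely in terms of $c_1^*, u_{-1}, u_0, u_1$. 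The remaining work is to expand $(u_{-1}u_1 - u_0^{q+1})^s$ by the binomial theorem and to verify that the resulting combination of terms collapses to the claimed right-hand side of $(K_{s+1})$, namely $c_1^*\cdot u_1^{s+1} + u_{-1}^{q-s-1}\cdot u_0\cdot\sum_{i=1}^{s+1}(-1)^i\binom{s+1}{i}(u_{-1}u_1)^{s+1-i}(u_0^{q+1})^{i-1}$. This is a binomial-coefficient bookkeeping exercise; the key identity driving it is Pascal's rule $\binom{s}{i-1} + \binom{s}{i} = \binom{s+1}{i}$, which lets the contribution from the $c_1^*\cdot u_1^s$ term (after multiplying by $u_1$ and isolating) merge with the shifted sum coming from the $(u_{-1}u_1 - u_0^{q+1})^s$ expansion.

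Once $(K_s)$ is established for all $1\leqslant s\leqslant q-1$, the companion relation $(K_s^*)$ follows immediately by applying the algebra involution $\ast$ to both sides of $(K_s)$, using the already-recorded facts $h_s^* = h_{q-1-s}$, $u_{-1}^* = u_1$, $u_1^* = u_{-1}$, $u_0^* = u_0$, $c_1^* {}^* = c_1$, and $d_2^{*\,\ast} = d_2$ (so that $d_2^{*s}$ maps to $d_2^{s}$). Since the binomial sum on the right-hand side of $(K_s)$ is symmetric in $u_{-1}$ and $u_1$ (only the product $u_{-1}u_1$ appears there, together with the extra factor $u_{-1}^{q-s}$ which becomes $u_1^{q-s}$), applying $\ast$ visibly produces exactly $(K_s^*)$.

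The main obstacle I anticipate is the inductive step's combinatorial verification: keeping careful track of the exponents of $u_{-1}$, $u_1$, and $u_0^{q+1}$ as the binomial expansion of $(u_{-1}u_1 - u_0^{q+1})^s$ interacts with the factor $u_0\cdot u_{-1}^{q-1-s}$ and with the $u_1$ coming from $(R_s)$, and then matching this against the target sum with upper limit $s+1$. The potential for off-by-one errors in the summation index is real, so I would organize the computation by first writing $d_2^{*(s+1)} h_{s+1}$ as the difference of two explicit polynomials and then comparing coefficients of each monomial $u_{-1}^{a} u_1^{b}(u_0^{q+1})^{c}$ on the two sides; the verification then reduces to Pascal's rule applied term by term.
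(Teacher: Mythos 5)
Your proposal is correct, but it proves the lemma by a genuinely different route than the paper. The paper's argument is a single direct computation: multiply the defining expression for $h_s$ by $u_0^q\cdot d_2^{*s}$ to get $u_0^q\cdot h_s\cdot d_2^{*s}=c_0^*\cdot u_1^{s+1}+u_{-1}^{q-s}\cdot(d_2\cdot d_2^*)^s$, eliminate $c_0^*\cdot u_1^{s+1}$ via the relation ($T_1^*$), expand $(d_2\cdot d_2^*)^s=(u_{-1}u_1-u_0^{q+1})^s$ by ($K_{00}$) and the binomial theorem so that the $i=0$ term cancels $u_{-1}^q u_1^s$, and finally divide by $u_0^q$. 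Your induction instead takes the recursion ($R_s$) as the engine: the base case is immediate from ($R_0$) together with $h_0=c_1^*$ (indeed you do not even need the defining fraction for $h_1$ or ($K_{00}$) there, since ($R_0$) gives $h_1 d_2^*=c_1^*u_1-u_0u_{-1}^{q-1}$ outright), and the inductive step works exactly as you describe: multiplying ($R_s$) by $d_2^{*s}$, substituting the hypothesis, expanding $(u_{-1}u_1-u_0^{q+1})^s$, and applying Pascal's rule does collapse everything onto ($K_{s+1}$) --- I have checked the bookkeeping and it goes through, with the range $1\leqslant s\leqslant q-2$ of ($R_s$) sufficing to reach $s=q-1$. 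One small imprecision in your description: the term $c_1^*u_1^s$, after multiplication by $u_1$, passes through untouched as the leading term $c_1^*u_1^{s+1}$ of ($K_{s+1}$); what Pascal's rule actually merges is the induction-hypothesis sum (times $u_1$, after rewriting $u_{-1}^{q-s}u_1=u_{-1}^{q-s-1}\cdot u_{-1}u_1$) with the index-shifted binomial sum coming from $u_0\cdot u_{-1}^{q-1-s}(u_{-1}u_1-u_0^{q+1})^s$. Comparing the two approaches: the paper's proof is shorter and avoids induction, but it uses the explicit fractional definition of $h_s$, the relation ($T_1^*$), and a division by $u_0^q$; yours stays entirely polynomial, needs only ($R_s$), ($K_{00}$) and $h_0=c_1^*$, at the cost of the Pascal bookkeeping. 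The derivation of ($K_s^*$) by applying the involution is the same in both.
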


\begin{proof}
Note that $u_0^q\cdot h_{s}\cdot d_{2}^{*s}=c_{0}^{*}\cdot u_{1}^{s+1}+u_{-1}^{q-s}\cdot(d_{2}\cdot d_{2}^{*})^{s}=
c_{0}^{*}\cdot u_{1}^{s+1}+u_{-1}^{q-s}\cdot(u_{-1}\cdot u_{1}-u_{0}^{q+1})^{s}$. It follows that (\ref{TT1}) that
$c_{0}^{*}\cdot u_{1}^{s+1}=c_{1}^*\cdot u_{0}^{q}\cdot u_1^s-u_{-1}^{q}\cdot u_1^s$. By the binomial formula, we see that
\begin{eqnarray*}
u_{-1}^{q-s}\cdot(u_{-1}\cdot u_{1}-u_{0}^{q+1})^{s}&=&u_{-1}^{q-s}\cdot\sum_{i=0}^s (-1)^i{s\choose i} (u_{-1}\cdot u_{1})^{s-i}(u_{0}^{q+1})^i \\
&=&u_{-1}^q\cdot u_1^s+u_{-1}^{q-s}\cdot\sum_{i=1}^s (-1)^i{s\choose i} (u_{-1}\cdot u_{1})^{s-i}(u_{0}^{q+1})^i\\
&=&u_{-1}^q\cdot u_1^s+u_{-1}^{q-s}\cdot u_0^{q+1}\cdot\sum_{i=1}^s (-1)^i{s\choose i} (u_{-1}\cdot u_{1})^{s-i}(u_{0}^{q+1})^{i-1}.
\end{eqnarray*}
Hence, $u_0^q\cdot h_{s}\cdot d_{2}^{*s}=c_{1}^*\cdot u_{0}^{q}\cdot u_1^s+u_{-1}^{q-s}\cdot u_0^{q+1}\cdot\sum_{i=1}^s (-1)^i{s\choose i} (u_{-1}\cdot u_{1})^{s-i}(u_{0}^{q+1})^{i-1}$. The proof will be completed via dividing with $u_0^q$ on the both sides of this equality.
\end{proof}

In particular, (\ref{Ks}) with $s=q-1$ implies the fourth relation:
\begin{equation}
\tag{$T_{10}$}
\label{T10}
c_{1}\cdot c_{0}^{*}-c_{1}^{*}\cdot u_{1}^{q-1}-u_{-1}\cdot u_{0}\cdot\sum_{i=1}^{q-1} (-1)^i{q-1\choose i} (u_{-1}\cdot u_{1})^{q-1-i}\cdot u_{0}^{(q+1)(i-1)}=0.
\end{equation}
Applying the involution $\ast$ to (\ref{T10}) (or taking $s=q-1$ in (\ref{KKs})), we obtain the fifth relation:
\begin{equation}
\tag{$T_{01}$}
\label{T01}
c_{0}\cdot c_{1}^{*}-c_{1}\cdot u_{-1}^{q-1}-u_{0}\cdot u_{1}\cdot\sum_{i=1}^{q-1} (-1)^i{q-1\choose i} (u_{-1}\cdot u_{1})^{q-1-i}\cdot u_{0}^{(q+1)(i-1)}=0.
\end{equation}

\section{The Main Theorem} \label{sec4}
\setcounter{equation}{0}
\renewcommand{\theequation}
{4.\arabic{equation}}
\setcounter{theorem}{0}
\renewcommand{\thetheorem}
{4.\arabic{theorem}}

\noindent In this section, we show that the five relations (\ref{T1}), (\ref{TT1}), (\ref{T00}), (\ref{T01}), and (\ref{T10})
constructed in previous two sections can generate any relation for $\F_{q}[V\oplus V^{*}]^{\GL_{2}(\F_{q})}$, confirming the special case $n=2$ in \cite[Conjecture 16]{CW17}.
Let $S:=\F_q[C_{0},C_{1},C_{0}^{*},C_{1}^{*},U_{-1},U_{0},U_{1}]$ be the polynomial ring in seven variables.
Consider the standard surjective homomorphism of $\F_q$-algebras $$\pi: S\ra\F_{q}[V\oplus V^{*}]^{\GL_{2}(\F_{q})}$$
defined by $C_i\mapsto c_i, C_i^*\mapsto c_i^*, U_j\mapsto u_j$
for $i\in\{0,1\}$ and $j\in\{-1,0,1\}$. Let $\I$ be the ideal generated by the five elements of $S$ that correspond to the left-hand sides of the relations (\ref{T1}), (\ref{TT1}), (\ref{T00}), (\ref{T01}), and (\ref{T10}), respectively. Clearly, $\I$ is contained in $\ker(\pi)$. 

The main result of this note is the following. 

\begin{thm}\label{stm}
$\ker(\pi)=\I$.
\end{thm}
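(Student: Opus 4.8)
The plan is to derive $\ker(\pi)=\I$ from the injectivity of the induced surjection $\bar\pi\colon S/\I\twoheadrightarrow\F_q[V\oplus V^*]^{\GL_2(\F_q)}$, which is well defined because $\I\subseteq\ker(\pi)$. Grade $S$ so that $\pi$ is degree preserving, i.e.\ $\deg C_0=\deg C_0^*=q^2-1$, $\deg C_1=\deg C_1^*=q^2-q$, $\deg U_{-1}=\deg U_1=q+1$, $\deg U_0=2$; one checks each of the five generators of $\I$ is homogeneous, so $\bar\pi$ is a graded surjection, and it is an isomorphism as soon as $\dim_{\F_q}(S/\I)_d=\dim_{\F_q}\bigl(\F_q[V\oplus V^*]^{\GL_2(\F_q)}\bigr)_d$ for all $d$. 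Concretely I would exhibit a finite set $\Delta$ of monomials of $S$ whose classes span $S/\I$ over $\F_q$ and prove that $\pi(\Delta)$ is $\F_q$-linearly independent in the invariant ring. Writing $\Delta_d:=\Delta\cap S_d$, surjectivity of $\bar\pi$ then gives $|\Delta_d|\ge\dim_{\F_q}(S/\I)_d\ge\dim_{\F_q}\bigl(\F_q[V\oplus V^*]^{\GL_2(\F_q)}\bigr)_d\ge|\pi(\Delta_d)|=|\Delta_d|$, so equality holds throughout and $\bar\pi$ is an isomorphism.

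To produce $\Delta$ I would treat the five generators of $\I$ as rewriting rules for a suitable monomial order on $S$. Read \eqref{T1} as $U_1^q\equiv C_1U_0^q-C_0U_{-1}$ to cut the $U_1$-exponent below $q$, and \eqref{TT1} as $U_{-1}^q\equiv C_1^*U_0^q-C_0^*U_1$ to do the same for $U_{-1}$; expand \eqref{T00} and solve for its top term $U_0^{q^2-1}$ to bound the $U_0$-exponent once the $U_{\pm1}$-exponents are small; and use \eqref{T00}, \eqref{T01}, \eqref{T10} to delete the monomials $C_0C_0^*$, $C_0C_1^*$, $C_1C_0^*$, so that only a short explicit list of monomials in $C_0,C_1,C_0^*,C_1^*$ survives (powers of the $c_i$ alone, powers of the $c_i^*$ alone, or $C_1^aC_1^{*b}$). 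The real content is that this rewriting terminates and lands in one fixed finite $\Delta$; this is presumably what the three lemmas of Section \ref{sec5} are for, ensuring the binomial ``tails'' of \eqref{T00}, \eqref{T01}, \eqref{T10} are strictly smaller in the order, with the identities \eqref{Ks}, \eqref{KKs}, \eqref{Rs} for the $\SL_2(\F_q)$-invariants $h_s$ as the bookkeeping device.

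For the $\F_q$-independence of $\pi(\Delta)$ I would invoke an external description of the invariant ring. Since $\F_q[V]$ is module-finite over $\F_q[V]^{\GL_2(\F_q)}=\F_q[c_0,c_1]$ and $\F_q[V^*]$ over $\F_q[c_0^*,c_1^*]$, the ring $\F_q[V\oplus V^*]$, and with it $\F_q[V\oplus V^*]^{\GL_2(\F_q)}$, is module-finite over $A:=\F_q[c_0,c_1,c_0^*,c_1^*]$. I would identify $\pi(\Delta)$ with an $A$-basis of the invariant ring, either by quoting the module structure already implicit in the minimal generating set of \cite{CW17}, or by passing to a localisation where the $\GL_2(\F_q)$-action is transparent — inverting $u_0$ (equivalently $d_2d_2^*$, by \eqref{K00}) or the Dickson invariant $c_0$ — so that $\pi(\Delta)$ is manifestly a basis over the localised $A$, and then descending independence to the unlocalised ring by a leading-monomial argument in $\F_q[x_1,x_2,y_1,y_2]$.

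The step I expect to be hardest is the \emph{completeness} of the reduction of the second paragraph, intertwined with the certification of the third. Since Theorem \ref{stm} confirms a conjecture, a priori the five listed relations might generate an ideal strictly smaller than $\ker(\pi)$; excluding this forces one to show simultaneously that no monomial survives in $S/\I$ outside $\Delta$ \emph{and} that $\Delta$ is not too large, i.e.\ $\pi|_\Delta$ is injective. Keeping the messy binomial sums in \eqref{T00}, \eqref{T01}, \eqref{T10} under control throughout the rewriting is the crux — exactly what Section \ref{sec5} is designed to handle — while the only genuinely external ingredient is the previously established generating set and $A$-module structure of $\F_q[V\oplus V^*]^{\GL_2(\F_q)}$.
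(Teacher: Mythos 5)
Your high-level frame --- show the graded surjection $\bar\pi\colon S/\I\twoheadrightarrow\F_q[V\oplus V^*]^{\GL_2(\F_q)}$ is injective by exhibiting a set spanning $S/\I$ whose image is linearly independent --- is sound in principle, but the two steps that carry all the content are left undone, and as described they would not go through. First, the rewriting scheme is not coherent as stated: you propose to use \eqref{T00} simultaneously to delete $C_0C_0^*$ and to cap the $U_0$-exponent at $q^2-1$, but a single relation can only rewrite one leading monomial under a fixed monomial order, and you never exhibit an order for which your claimed leading terms ($U_1^q$ in \eqref{T1}, $U_{-1}^q$ in \eqref{TT1}, $U_0^{q^2-1}$ in \eqref{T00}, the products $C_iC_j^*$ in \eqref{T00}, \eqref{T01}, \eqref{T10}) are the actual leading terms. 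Second, and more seriously, the independence of $\pi(\Delta)$ is exactly the assertion that the five relations generate all of $\ker(\pi)$ in each degree, and it cannot be ``quoted'': the only known free $N$-module basis of the invariant ring, $A\cup B\cup C$ from \cite[Theorem 3.2]{Che14}, is \emph{not} monomial in the seven generators (the family $C$ consists of the elements $h_sd_2^{*s}u_0^k(d_2d_2^*)^t$, which by \eqref{Ks} are genuine polynomials in them), so no monomial $A$-basis is available, and your localisation suggestion only descends independence, it does not produce the required spanning/counting statement. (A smaller slip: a finite $\Delta$ cannot span $S/\I$ over $\F_q$; your inequality chain only makes sense with $\Delta$ degreewise finite or with $\NN$-module spanning.)

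For comparison, the paper sidesteps normal forms entirely: it uses Cohen--Macaulayness to regard $\F_q[V\oplus V^*]^{\GL_2(\F_q)}$ as a free module over $N=\F_q[c_0,c_1,c_0^*,c_1^*]$ with the explicit basis $A\cup B\cup C$, cites \cite[Section 3.8.3]{DK15} to conclude that $\ker(\pi)$ is generated by the quadratic module relations $f\cdot g-\ell_{f,g}$ with $f,g\in\A\cup\B\cup\C$ and $\ell_{f,g}\in\LL$, and then Lemmas \ref{keylem}, \ref{keylem2} and \ref{keylem3} verify by explicit manipulation with \eqref{Rs}, \eqref{Ks}, \eqref{K00} that every such product lies in $\LL$ modulo $\I$, so these generators of $\ker(\pi)$ lie in $\I$. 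If you want to keep your dimension-count architecture, the workable version is to take $\Delta$ to be the (non-monomial) pullbacks $\A\cup\B\cup\C$, prove $S=\LL+\I$ by induction on monomials, and obtain injectivity from the freeness of the invariant ring over $N$; that variant even avoids the appeal to \cite{DK15}, but filling it in requires precisely the Section \ref{sec5}-type reductions your sketch defers.
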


We define $N$ to be the polynomial algebra generated by $\{c_{0},c_{1},c_{0}^{*},c_{1}^{*}\}$ over $\F_q$ and observe that $N$ is  a Noether normalization for $\F_{q}[V\oplus V^{*}]^{\GL_{2}(\F_{q})}$. It follows from \cite[Theorem 2.4]{BK11} and \cite[Theorem 1]{CHP91} that $\F_{q}[V\oplus V^{*}]^{\GL_{2}(\F_{q})}$ is Cohen-Macaulay, thus it is a free $N$-module. Moreover,
$\F_{q}[V\oplus V^{*}]^{\GL_{2}(\F_{q})}$ has a  basis $A\cup B\cup C$ over $N$, where
\begin{eqnarray*}
A&=&\{a(i,j,t):=  u_{-1}^{i}\cdot u_{1}^{j}\cdot (d_{2}^{*}\cdot d_{2})^{t}\mid  0\leqslant i,j\leqslant q-1, 0\leqslant t\leqslant q-2\},\\
 B&=& \{b(i,j,k,t):= u_{-1}^{i}\cdot u_{1}^{j}\cdot u_{0}^{k}\cdot (d_{2}^{*}\cdot d_{2})^{t}\mid 0\leqslant i,j\leqslant q-2, 1\leqslant k\leqslant q,0\leqslant t\leqslant q-2\},\\
 C&=&\{c(s,k,t):= (h_{s}\cdot d_{2}^{*s})\cdot u_{0}^{k}\cdot (d_{2}^{*}\cdot d_{2})^{t}\mid 1\leqslant s\leqslant q-2, 0\leqslant k\leqslant q-1,0\leqslant t\leqslant q-2\},
\end{eqnarray*}
see \cite[Theorem 3.2]{Che14}. We use $x(i,j,t), y(i,j,k,t)$ and $z(s,k,t)$ to denote the elements in $S$ that correspond to 
$a(i,j,t), b(i,j,k,t)$ and $c(s,k,t)$ respectively. For example, 
$$x(i,j,t)=U_{-1}^i\cdot U_1^j\cdot (U_{-1}\cdot U_{1}-U_{0}^{q+1})^t.$$
Also we use $\A,\B,\C$ to denote the subsets of $S$ corresponding to $A,B,C$ respectively. 
Let $\NN$ be the polynomial subalgebra of $S$ generated by $\{C_{0},C_{1},C_{0}^{*},C_{1}^{*}\}$ and 
$$\LL:=\sum_{\ell\in\A\cup\B\cup\C} \ell\cdot\NN.$$

\begin{proof}[Proof of Theorem \ref{stm}] To prove $\ker(\pi)\subseteq\I$, we see from \cite[Section 3.8.3]{DK15} that all algebraic relations among 
$\{c_{0},c_{1},c_{0}^{*},c_{1}^{*},u_{-1},u_{0},u_{1}\}$ can be generated by $N$-module relations among elements of $A\cup B\cup C$; and the latter consists of 
all linear and quadratic relations. Note that in our case, there are no linear relations  among elements of $A\cup B\cup C$ as $\F_{q}[V\oplus V^{*}]^{\GL_{2}(\F_{q})}$ is Cohen-Macaulay. Equivalently, this says, via pulling back in $S$, that
the set
$$K:=\{f\cdot g-\ell_{f,g}\in\ker(\pi)\mid f,g\in\A\cup\B\cup\C, \ell_{f,g}\in \LL\}$$
is a generating set for $\ker(\pi)$. Hence, it is sufficient to show 
that $K\subseteq \I$. On the other hand, Lemma \ref{keylem3} below shows that for any $f,g\in\A\cup\B\cup\C$, we have
$f\cdot g\in \LL$ modulo $\I$, i.e., there exists an $\ell_{f,g}\in \LL$ such that $f\cdot g-\ell_{f,g}\in \I\subseteq\ker(\pi).$ Therefore, $K\subseteq\I$, as desired. 
\end{proof}

\section{Several Lemmas} \label{sec5}
\setcounter{equation}{0}
\renewcommand{\theequation}
{5.\arabic{equation}}
\setcounter{theorem}{0}
\renewcommand{\thetheorem}
{5.\arabic{theorem}}

\noindent This sections consists of three technical lemmas that serve to the proof of Theorem \ref{stm}.
Throughout this section we are working over modulo $\I$.

\begin{lem}\label{keylem}
For any $\ell\in\LL$ and $n\in\N^+$, we have $U_0^n\cdot\ell\in \LL$ modulo $\I$.
\end{lem}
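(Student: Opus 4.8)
The plan is to reduce the statement to a single multiplication by $U_0$, that is, to prove $U_0\cdot\ell\in\LL$ modulo $\I$ for every $\ell\in\LL$; the general case then follows by an immediate induction on $n$. Since $\LL=\sum_{\ell'\in\A\cup\B\cup\C}\ell'\cdot\NN$ and $\NN$ commutes with $U_0$, it suffices to treat each basis monomial $\ell'\in\A\cup\B\cup\C$ separately and show $U_0\cdot\ell'\in\LL$ modulo $\I$. Thus the proof splits into three cases according to whether $\ell'=x(i,j,t)$, $\ell'=y(i,j,k,t)$, or $\ell'=z(s,k,t)$.

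For the $\B$ case this is nearly immediate: multiplying $y(i,j,k,t)=U_{-1}^i U_1^j U_0^k(U_{-1}U_1-U_0^{q+1})^t$ by $U_0$ gives $U_{-1}^i U_1^j U_0^{k+1}(U_{-1}U_1-U_0^{q+1})^t$, which is $y(i,j,k+1,t)\in\B$ when $k+1\le q$, and when $k+1=q+1$ one rewrites $U_0^{q+1}=U_{-1}U_1-(U_{-1}U_1-U_0^{q+1})$ to express it in terms of $\A$- and $\B$-monomials (possibly with a higher power of $U_{-1}U_1-U_0^{q+1}$, which one then reduces using $(U_{-1}U_1-U_0^{q+1})^{q-1}$; this last reduction is exactly where the relation $(T_{00})$ enters, trading $(U_{-1}U_1-U_0^{q+1})^{q-1}$ for $C_0C_0^*\in\NN$). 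For the $\A$ case, $U_0\cdot x(i,j,t)=U_{-1}^iU_1^jU_0(U_{-1}U_1-U_0^{q+1})^t$, which already lies in $\B$ provided $i,j\le q-2$; if $i=q-1$ or $j=q-1$ one must first lower that exponent, and here the relations $(T_1)$, $(T_1^*)$ are the tools — e.g. $(T_1)$ gives $U_1^q\equiv C_1U_0^q-C_0U_{-1}$ modulo $\I$, converting a power $q$ of $U_1$ into $\NN$-multiples of lower-index monomials, at the cost of introducing a $U_0^q$ which is handled by the $\B$-case analysis just described.

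The genuinely delicate case is $\C$. Here one must analyze $U_0\cdot z(s,k,t)=(h_s\cdot d_2^{*s})^{\sim}\,U_0^{k+1}(U_{-1}U_1-U_0^{q+1})^t$ where $(h_s d_2^{*s})^{\sim}$ denotes the element of $S$ obtained from the right-hand side of $(K_s)$, namely $C_1^*U_1^s+U_{-1}^{q-s}U_0\sum_{i=1}^s(-1)^i\binom{s}{i}(U_{-1}U_1)^{s-i}(U_0^{q+1})^{i-1}$. When $k+1\le q-1$ the result is again a $\C$-monomial, so the work is entirely at the boundary $k=q-1$: one substitutes the $(K_s)$-expression, distributes $U_0^{q}$ across the sum, and is left with a combination of monomials $C_1^*U_1^sU_0^q(\cdots)$ and $U_{-1}^{q-s}(U_{-1}U_1)^{s-i}U_0^{(q+1)i}(\cdots)$, each of which has $U_0$-exponent a multiple of $q$ or $q+1$ and hence is reducible by the $\A$/$\B$ machinery above, with leftover powers of $U_{-1}U_1-U_0^{q+1}$ cleared by $(T_{00})$ and leftover powers of $U_{-1},U_1$ cleared by $(T_1),(T_1^*)$. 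I expect the bookkeeping in this $\C$-boundary case — keeping track simultaneously of the three exponents and verifying that every intermediate monomial produced genuinely falls back into $\LL$ after finitely many reductions — to be the main obstacle; the key structural point making it work is that each reduction step strictly decreases a suitable monomial order (for instance the $U_0$-degree, or a lexicographic combination), so the process terminates inside $\LL$.
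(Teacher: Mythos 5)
There is a genuine gap, and it sits exactly where the real work of the paper's proof lies: the case $\ell=x(q-1,j,t)$ (and its mirror $x(i,q-1,t)$) in $\A$. You propose to handle $i=q-1$ or $j=q-1$ by invoking $(T_1)$ or $(T_1^*)$, but those relations only rewrite $U_{-1}^{q}$ or $U_1^{q}$; they cannot touch the exponent $q-1$, which is precisely the obstruction here, since $U_0\cdot x(q-1,j,t)$ would have to be absorbed as $y(q-1,j,1,t)$ and the set $\B$ only allows exponents $\leqslant q-2$ on $U_{-1}$ and $U_1$. Worse, your reduction steps only ever produce $\A$- and $\B$-type monomials with $\NN$-coefficients, so they can never reach the $\C$-part of $\LL$; but the image of $u_0\cdot u_{-1}^{q-1}u_1^j(d_2d_2^*)^t$ in the free $N$-module genuinely involves the basis elements $c(s,k,t)$, so a procedure that never creates $z$-terms cannot terminate correctly. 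The paper's proof handles this case by using $(K_s)$ with $s=1$ to trade $U_{-1}^{q-1}U_0$ for $C_1^*U_1-z(1,0,0)$, then the recursion $(R_s)$ to push $z(1,0,0)\cdot x(0,j,t)$ up through $z(2,0,0),\dots,z(j+1,0,t)\in\C$, and at the corner cases $j=q-2,q-1$ it invokes $(T_{10})$ together with $(K_{00})$ and $(T_1)$. Note that your argument never uses $(T_{10})$ or $(T_{01})$ at all, which is a structural red flag: those two generators of $\I$ must enter somewhere, and this is the place.

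Two further points. First, your termination heuristic (``each reduction strictly decreases the $U_0$-degree or a lexicographic combination'') is not justified: applying $(T_1)$ replaces $U_1^q$ by $C_1U_0^q-C_0U_{-1}$, which \emph{increases} the $U_0$-degree, so the order you gesture at does not obviously decrease. Second, in the $\C$-case with $k=q-1$ you expand via $(K_s)$ and distribute $U_0^q$; the paper instead uses the defining identity $u_0^q\,h_sd_2^{*s}=c_0^*u_1^{s+1}+u_{-1}^{q-s}(d_2d_2^*)^s$, which yields only two terms, $C_0^*\cdot x(0,s+1,t)$ and $x(q-s,0,s)\cdot x(0,0,t)$, the latter finished off by $(T_{00})$; your route is workable in spirit but again leans on the unproven $\A$-case machinery. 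The $\B$-case and the overall reduction to a single multiplication by $U_0$ do match the paper.
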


\begin{proof}
Since $U_0^n\cdot\ell=U_0^{n-1}\cdot(U_0\cdot\ell)$, it is sufficient to show that $U_0\cdot\ell\in \LL$. As any element of $\LL$ is an $\NN$-linear combination of  elements of $\A\cup\B\cup\C$, it suffices to show that
$U_0\cdot\ell\in\LL$ for all $\ell\in\A\cup\B\cup\C.$ Thus our arguments will be separated into three cases.

We first assume that $\ell=x(i,j,t)\in\A$. Here we have three subcases: 
(1) If $i,j\leqslant q-2$, then $U_0\cdot\ell=y(i,j,1,t)\in\B\subseteq\LL$.
(2) Assume that  $i=q-1$. We consider the case where $j\leqslant q-3$. By (\ref{Ks}) with $s=1$, we have $U_{-1}^{q-1}\cdot U_0=C_1^*\cdot U_1-z(1,0,0)$. 
Thus  $U_0\cdot\ell=U_0\cdot U_{-1}^{q-1}\cdot x(0,j,t)=(C_1^*\cdot U_1-z(1,0,0))\cdot x(0,j,t)=C_1^*\cdot x(0,j+1,t)-
z(1,0,0)\cdot x(0,j,t)$. Thus it suffices to show that $z(1,0,0)\cdot x(0,j,t)\in\LL$. It follows from (\ref{Rs}) with $s=1$ that
\begin{eqnarray*}
z(1,0,0)\cdot x(0,j,t)& = & (y(q-2,0,1,1)+z(2,0,0))\cdot x(0,j-1,t)\\
 & = & y(q-2,j-1,1,t+1)+z(2,0,0)\cdot x(0,j-1,t),
\end{eqnarray*}  
where $$y(q-2,j-1,1,t+1)=\begin{cases}
  y(q-2,j-1,1,t+1)\in\B, & t\leqslant q-3, \\
 C_0\cdot C_0^*\cdot  y(q-2,j-1,1,0)\in\LL, & t=q-2.
\end{cases}$$
Using (\ref{Rs}) $(s=2,\dots,q-3)$ and proceeding in the same way on 
$z(2,0,0)\cdot x(0,j-1,t)$, we see that there exists an $\ell'_j\in\LL$ such that
$z(1,0,0)\cdot x(0,j,t)=\ell'+z(j+1,0,t)\in\LL$ for all $j\leqslant q-3$. Hence, $U_0\cdot \ell\in\LL$ for $j\leqslant q-3$.
Further, we consider the cases where $j=q-2$ and $j=q-1$. Combining (\ref{T10}) and (\ref{K00}) implies that
$u_{-1}^{q-1}\cdot u_0\cdot u_{1}^{q-2}=c_{1}\cdot c_{0}^{*}-c_{1}^{*}\cdot u_{1}^{q-1}-u_{-1}\cdot u_{0}\cdot \delta,$
where $$\delta:=\sum_{i=2}^{q-1}\sum_{j=1}^{i-1} (-1)^{2i-1}{q-1\choose i}{i-1\choose j} (u_{-1}\cdot u_{1})^{q-j-2}\cdot (d_2\cdot d_2^*)^{j}.$$ Note that if $\Delta$ denotes the polynomial of $S$ corresponding to $\delta$, then
$U_{-1}\cdot U_{0}\cdot \Delta\cdot x(0,0,t)$  and $U_{-1}\cdot U_{0}\cdot \Delta\cdot U_1\cdot x(0,0,t)$ are both in $\LL.$ 
Thus
$U_{-1}^{q-1}\cdot U_0\cdot U_{1}^{q-2}\cdot x(0,0,t)=C_{1}\cdot C_{0}^{*}\cdot x(0,0,t)-C_{1}^{*}\cdot x(0,q-1,t)-U_{-1}\cdot U_{0}\cdot \Delta\cdot x(0,0,t)\in\LL$
and moreover,  it follows from (\ref{T1}) that 
$U_{-1}^{q-1}\cdot U_0\cdot U_{1}^{q-1}\cdot x(0,0,t)=C_{1}\cdot C_{0}^{*}\cdot x(0,1,t)-C_{1}^{*}\cdot(C_1\cdot y(0,0,q,t)-C_0\cdot x(1,0,t))-U_{-1}\cdot U_{0}\cdot \Delta\cdot U_1\cdot x(0,0,t)\in\LL.$ Therefore,
$U_0\cdot x(q-1,j,t)\in\LL.$
(3) Symmetrically, switching the roles of $U_{-1}$ and $U_1$ and using the $*$-images of the relations appeared in the previous case, one can shows that $U_0\cdot x(i,q-1,t)\in\LL.$

Secondly, we assume that $\ell=y(i,j,k,t)\in\B$. If $k\leqslant q-1$, then $U_0\cdot \ell= y(i,j,k+1,t)\in\B\subseteq \LL$.
If $k=q$, then $U_0\cdot \ell=U_0^{q+1}\cdot x(i,j,t)=(x(1,1,0)-x(0,0,1))\cdot x(i,j,t)$, by (\ref{K00}).
Further, since $i,j\leqslant q-2$, $x(1,1,0)\cdot x(i,j,t)=x(i+1,j+1,t)\in\A$. The fact that $c_0\cdot c_0^*=(d_2\cdot d_2^*)^{q-1}$ implies that 
$x(0,0,1)\cdot x(i,j,t)\in\LL.$
Hence, $U_0\cdot \ell$ is contained in $\LL$.

Thirdly, we assume that $\ell=z(s,k,t)\in\C$. If $k\leqslant q-2$, then $U_0\cdot \ell=z(s,k+1,t)\in\C\subseteq\LL$.
If $k=q-1$, then $U_0\cdot \ell=U_0^q\cdot z(s,0,t)$. The fact that
$u_0^q\cdot h_{s}\cdot d_{2}^{*s}=c_{0}^{*}\cdot u_{1}^{s+1}+u_{-1}^{q-s}\cdot(d_{2}\cdot d_{2}^{*})^{s}$ means that
\begin{eqnarray*}
U_0^q\cdot z(s,0,t) &=&U_0^q\cdot z(s,0,0)\cdot x(0,0,t) \\
&=& (C_{0}^{*}\cdot x(0,s+1,0)+x(q-s,0,s))\cdot x(0,0,t)\\
&=& C_{0}^{*}\cdot x(0,s+1,t)+x(q-s,0,s)\cdot x(0,0,t).
\end{eqnarray*}
As $s\leqslant q-2$, we see that $x(0,s+1,t)\in\A$. The fact that $c_0\cdot c_0^*=(d_2\cdot d_2^*)^{q-1}$ implies that $x(q-s,0,s)\cdot x(0,0,t)\in\LL.$ Hence, 
$U_0\cdot \ell=U_0^q\cdot z(s,0,t)\in \LL$.
\end{proof}

\begin{lem}\label{keylem2}
Let $x(i_1,j_1,t_1)$ and $x(i_2,j_2,t_2)$ be two any elements of $\A$. Then $x(i_1,j_1,t_1)\cdot x(i_2,j_2,t_2)\in \LL$ modulo $\I$.
\end{lem}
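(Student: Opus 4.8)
The plan is to reduce the product of two basis elements of $\A$ to an $\NN$-linear combination of elements of $\A\cup\B\cup\C$ modulo $\I$ by a term-by-term rewriting procedure. Write $x(i_1,j_1,t_1)\cdot x(i_2,j_2,t_2) = U_{-1}^{i_1+i_2}\cdot U_1^{j_1+j_2}\cdot (U_{-1}U_1 - U_0^{q+1})^{t_1+t_2}$. The exponents now satisfy $0\leqslant i_1+i_2,\, j_1+j_2\leqslant 2q-2$ and $0\leqslant t_1+t_2\leqslant 2q-4$, so the only obstruction to membership in $\LL$ is that these exponents may exceed the ranges permitted in $\A$ (namely $\leqslant q-1$ for the $U_{-1}$- and $U_1$-exponents and $\leqslant q-2$ for the $(d_2d_2^*)$-exponent). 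The strategy is to eliminate the excess one unit at a time using the relations already established, in the order: first fix the $(d_2d_2^*)$-exponent, then the $U_1$-exponent, then the $U_{-1}$-exponent.

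First I would handle an exponent $t = t_1+t_2 \geqslant q-1$ on $d_2d_2^* = U_{-1}U_1 - U_0^{q+1}$. Using the relation $c_0 c_0^* = (d_2 d_2^*)^{q-1}$, which is (\ref{T00}) rewritten via (\ref{K00}), one replaces $(U_{-1}U_1-U_0^{q+1})^{q-1}$ by $C_0 C_0^*$ modulo $\I$; iterating, any power $t\geqslant q-1$ becomes $C_0^{\lfloor t/(q-1)\rfloor} C_0^{*\lfloor t/(q-1)\rfloor}$ times a power $\leqslant q-2$, and the $C_0,C_0^*$ factors sit harmlessly in $\NN$. After this step we may assume $0\leqslant t\leqslant q-2$, at the cost of possibly re-introducing a large $U_0$-power (since expanding $(U_{-1}U_1-U_0^{q+1})^{t}$ creates terms with $U_0^{(q+1)i}$); but Lemma~\ref{keylem} lets us absorb arbitrary $U_0$-powers into $\LL$ at the very end, so it is cleaner to keep the binomial expansion symbolic and postpone $U_0$-reduction.

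Next I would reduce the $U_1$-exponent $j \geqslant q$ down to the range $\leqslant q-1$. Here the tool is (\ref{T1}), which reads $C_0 U_{-1} - C_1 U_0^q + U_1^q = 0$, i.e. $U_1^q = C_1 U_0^q - C_0 U_{-1}$ modulo $\I$. Thus $U_1^j = U_1^{j-q}(C_1 U_0^q - C_0 U_{-1})$, trading one unit of excess $U_1$-degree for an $\NN$-coefficient times either a $U_0^q$ factor (absorbed via Lemma~\ref{keylem}) or an extra $U_{-1}$; repeating brings $j$ into $\{0,\dots,q-1\}$. Symmetrically, using (\ref{TT1}) in the form $U_{-1}^q = C_1^* U_0^q - C_0^* U_1$, I would reduce a large $U_{-1}$-exponent $i \geqslant q$ to the range $\leqslant q-1$ — but note this may push the $U_1$-exponent back up by one each time, so one must interleave the two reductions and argue termination by a suitable monotone quantity (for instance, lexicographic order on $(i+j,\ i)$, which strictly decreases under each application of (\ref{T1}) or (\ref{TT1}), since $U_1^q\mapsto$ terms of total $U$-degree $\leqslant q$ with strictly smaller $i$, and likewise for $U_{-1}^q$). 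Once $i,j\leqslant q-1$ and $t\leqslant q-2$, the resulting monomials are either already in $\A$, or are in $\B\cup\C$ after collecting surviving $U_0$-powers, or have a leftover $U_0$-power which Lemma~\ref{keylem} places in $\LL$.

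The main obstacle I anticipate is the bookkeeping of the simultaneous reduction of $i$ and $j$: each use of (\ref{T1}) lowers $j$ but can raise $i$, and each use of (\ref{TT1}) does the reverse, so I need a genuinely decreasing termination measure rather than an obvious one, and I must check that the cross-terms (the $C_0 U_{-1}$ part of $U_1^q$ and the $C_0^* U_1$ part of $U_{-1}^q$) do not cause the process to cycle. The clean way out is to observe that both $i$ and $j$ start bounded by $2q-2$, that (\ref{T1}) strictly decreases $j$ while keeping $i\leqslant 2q-2$ throughout (since $i$ can only grow while $j\geqslant q$, and once $j\leqslant q-1$ we switch to reducing $i$ with (\ref{TT1}), which cannot raise $j$ above $q-1$ by more than had already been spent), so finitely many steps suffice; then a final pass with Lemma~\ref{keylem} to clear all accumulated $U_0$-powers, and a final pass with $c_0c_0^* = (d_2d_2^*)^{q-1}$ to re-normalize any $(d_2d_2^*)$-exponent that crept back above $q-2$ during the $U_1$- and $U_{-1}$-reductions. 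Everything else is routine substitution within the polynomial ring $S$ modulo $\I$.
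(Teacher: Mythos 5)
Your proposal is correct and follows essentially the same route as the paper: reduce the product by using (\ref{T00}) to absorb $(U_{-1}U_1-U_0^{q+1})^{q-1}$, (\ref{T1}) and (\ref{TT1}) to absorb $U_1^{q}$ and $U_{-1}^{q}$, and Lemma \ref{keylem} to clear the accumulated $U_0$-powers. The paper sidesteps your termination bookkeeping by noting that $i_1+i_2\leqslant 2q-2$, $j_1+j_2\leqslant 2q-2$, $t_1+t_2\leqslant 2q-4$, so writing these sums as $q\cdot\ell_i+i_3$, $q\cdot\ell_j+j_3$, $(q-1)\cdot\ell_t+t_3$ with $\ell_i,\ell_j,\ell_t\in\{0,1\}$ allows a single substitution followed by a four-case check on $(\ell_i,\ell_j)$, the key point being that $\ell_i=1$ forces $i_3\leqslant q-2$ (and likewise for $j_3$), so the one extra $U_{-1}$ or $U_1$ produced by a cross-term never leaves the admissible range.
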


\begin{proof}
We may assume that $i_1+i_2=q\cdot\ell_i+i_3$, $j_1+j_2=q\cdot\ell_j+j_3$ and $t_1+t_2=(q-1)\cdot\ell_t+t_3$, where
$\ell_i,\ell_j,\ell_t\in\{0,1\}$. Note that if $\ell_i=0$, then $i_3\leqslant q-1$; and if $\ell_i=1$, then $i_3\leqslant q-2$.
Similarly, if $\ell_j=0$, then $j_3\leqslant q-1$; and if $\ell_j=1$, then $j_3\leqslant q-2$. If $\ell_t=0$, then $t_3\leqslant q-2$; and if $\ell_t=1$, then $t_3\leqslant q-3$.
It follows from (\ref{T00}), (\ref{T1}) and (\ref{TT1}) that
\begin{eqnarray*}
 x(i_1,j_1,t_1)\cdot x(i_2,j_2,t_2)&=&U_{-1}^{i_1+i_2}\cdot U_{1}^{j_1+j_2}\cdot (U_{-1}\cdot U_{1}-U_{0}^{q+1})^{t_1+t_2}  \\
 & = &  U_{-1}^{q\cdot\ell_i+i_3}\cdot U_{1}^{q\cdot\ell_j+j_3}\cdot (U_{-1}\cdot U_{1}-U_{0}^{q+1})^{(q-1)\cdot\ell_t+t_3}\\
 &=& (C_{1}^{*}\cdot U_{0}^{q}-C_{0}^{*}\cdot U_{1})^{\ell_i}(C_{1}\cdot U_{0}^{q}-C_{0}\cdot U_{-1})^{\ell_j}(C_0\cdot C_0^*)^{\ell_t}\cdot x(i_3,j_3,t_3).
\end{eqnarray*}
To complete the proof, we need to show that the right-hand side of the previous equality, denoted by $\Delta$, belongs to $\LL.$
Consider the following four cases: (1) If $(\ell_i,\ell_j)=(0,0)$, then $\Delta=(C_0\cdot C_0^*)^{\ell_t}\cdot x(i_3,j_3,t_3)\in \LL$.
(2) Suppose that $(\ell_i,\ell_j)=(1,0)$. By Lemma \ref{keylem}, we see that $U_{0}^{q}\cdot x(i_3,j_3,t_3)\in\LL$. Moreover, using 
(\ref{T1}), we have
$$U_1\cdot x(i_3,j_3,t_3)=\begin{cases}
 C_1 \cdot y(i_3,0,q,t_3)-C_0\cdot x(i_3+1,0,t_3)\in\LL,    & j_3=q-1, \\
  x(i_3,j_3+1,t_3)\in\A,    & j_3\leqslant q-2.
\end{cases}$$ 
Hence, $\Delta=(C_0\cdot C_0^*)^{\ell_t}\cdot x(i_3,j_3,t_3)\cdot (C_{1}^{*}\cdot U_{0}^{q}-C_{0}^{*}\cdot U_{1})\in \LL$. (3) Symmetrically, the statement also follows for the case when $(\ell_i,\ell_j)=(0,1)$. 
(4) Suppose $(\ell_i,\ell_j)=(1,1)$. Then $\Delta= (C_{1}^{*}\cdot U_{0}^{q}-C_{0}^{*}\cdot U_{1})(C_{1}\cdot U_{0}^{q}-C_{0}\cdot U_{-1})(C_0\cdot C_0^*)^{\ell_t}\cdot x(i_3,j_3,t_3)$.
Lemma \ref{keylem} implies that $U_0^{2q}\cdot x(i_3,j_3,t_3)\in\LL.$ Further, 
$U_0^q\cdot U_{-1}\cdot x(i_3,j_3,t_3)=U_0^q\cdot x(i_3+1,j_3,t_3)\in\LL, U_{1}\cdot U_0^q\cdot x(i_3,j_3,t_3)=U_0^q\cdot x(i_3,j_3+1,t_3)\in\LL$, and $U_{-1}\cdot U_1\cdot x(i_3,j_3,t_3)=x(i_3+1,j_3+1,t_3)\in\A\subseteq \LL$. Hence,
$\Delta\in\LL.$
The proof is completed.
\end{proof}

\begin{lem}\label{keylem3}
Let $f$ and $g$ be two any elements of $\A\cup\B\cup\C$. Then $f\cdot g\in \LL$ modulo $\I$.
\end{lem}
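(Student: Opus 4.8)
The plan is to sidestep a direct case analysis on where $f$ and $g$ sit among $\A,\B,\C$ and instead to exploit the identity of rings $S=\NN[U_{-1},U_0,U_1]$: under it, $f\cdot g$, like every element of $S$, is an $\NN$-linear combination of monomials $U_{-1}^{a}\cdot U_1^{b}\cdot U_0^{c}$. Since $\LL$ is an $\NN$-submodule of $S$ and $\I$ is an ideal, the set of elements of $S$ lying in $\LL$ modulo $\I$ is closed under $\NN$-linear combinations. Hence the lemma reduces to the single assertion
\[
U_{-1}^{a}\cdot U_1^{b}\cdot U_0^{c}\in\LL\ \ \text{modulo}\ \I\qquad\text{for all }a,b,c\in\N .
\]

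I would prove this by induction on $a+b$. By Lemma \ref{keylem} it is enough to treat $c=0$ and afterwards reinstate $U_0^{c}$. If $a\le q-1$ and $b\le q-1$, then $U_{-1}^{a}\cdot U_1^{b}=x(a,b,0)\in\A\subseteq\LL$, which is the base case. Otherwise we may assume $a\ge q$; relation (\ref{TT1}) gives $U_{-1}^{q}\equiv C_1^{*}\cdot U_0^{q}-C_0^{*}\cdot U_1\pmod{\I}$, whence
\[
U_{-1}^{a}\cdot U_1^{b}\equiv C_1^{*}\cdot\bigl(U_0^{q}\cdot U_{-1}^{a-q}\cdot U_1^{b}\bigr)-C_0^{*}\cdot\bigl(U_{-1}^{a-q}\cdot U_1^{b+1}\bigr)\pmod{\I}.
\]
Because $q\ge 2$, both $U_{-1}^{a-q}\cdot U_1^{b}$ and $U_{-1}^{a-q}\cdot U_1^{b+1}$ have total degree in $U_{-1}$ and $U_1$ strictly less than $a+b$, so the induction hypothesis places them in $\LL$ modulo $\I$; Lemma \ref{keylem} then reabsorbs the surplus factor $U_0^{q}$, and scaling by $C_0^{*},C_1^{*}\in\NN$ keeps everything inside $\LL$ modulo $\I$. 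The case $b\ge q$ is symmetric, using (\ref{T1}) in the form $U_1^{q}\equiv C_1\cdot U_0^{q}-C_0\cdot U_{-1}\pmod{\I}$. This closes the induction and therefore proves the lemma. (Alternatively one may argue by cases: $\A\cdot\A$, $\A\cdot\B$, $\B\cdot\B$ follow from Lemma \ref{keylem2} after peeling off powers of $U_0$ via Lemma \ref{keylem}, while the cases involving $\C$ are reduced to these by writing $z(s,k,t)=z(s,0,0)\cdot U_0^{k}\cdot x(0,0,t)$ and substituting the expansion of $z(s,0,0)$ furnished by (\ref{Ks}).)

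The step I expect to be the main obstacle is making the induction genuinely well-founded while controlling the two side-effects of each rewrite: the exponent of $U_0$ grows, and one unit of $U_{-1}$ can migrate into $U_1$ (or conversely). The first is harmless precisely because Lemma \ref{keylem} reabsorbs arbitrary powers of $U_0$ into $\LL$; the second is the reason the induction must run on the combined degree $a+b$ rather than on $a$ or $b$ separately, and one must verify that this quantity drops at every step — which it does exactly because $q\ge 2$. Everything else is the routine bookkeeping already performed in Lemmas \ref{keylem} and \ref{keylem2}.
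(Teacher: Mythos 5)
Your proof is correct, but it takes a genuinely different route from the paper's. The paper argues by a five-case analysis according to whether $f$ and $g$ lie in $\A$, $\B$ or $\C$: Lemma \ref{keylem2} settles the $\A\cdot\A$ case, the $U_0$-powers carried by $\B$-elements are absorbed by Lemma \ref{keylem}, and any factor from $\C$ is expanded via the pull-back of (\ref{Ks}) into an $\NN$-combination of $\A$-elements times powers of $U_0$, so that every case reduces to Lemma \ref{keylem2} plus Lemma \ref{keylem} --- essentially the alternative you sketch in parentheses at the end. Your main argument instead uses $S=\NN[U_{-1},U_0,U_1]$ and the fact that $\LL+\I$ is an $\NN$-submodule to reduce the lemma to the single claim that every monomial $U_{-1}^aU_1^bU_0^c$ lies in $\LL$ modulo $\I$, proved by induction on $a+b$: rewrite $U_{-1}^q$ and $U_1^q$ via (\ref{TT1}) and (\ref{T1}), reabsorb the resulting $U_0^q$ by Lemma \ref{keylem}, and note that the degree in $U_{-1},U_1$ drops by $q$ or $q-1\geqslant 1$, so the induction is well founded, with base case $x(a,b,0)\in\A$. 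This checks out and bypasses Lemma \ref{keylem2} and the $(K_s)$-expansion of $\C$ altogether; it even yields the stronger statement $S=\LL+\I$, which (by freeness of $A\cup B\cup C$ over $N$ and injectivity of $\pi$ on $\NN$) would let one deduce $\ker(\pi)\subseteq\I$ in Theorem \ref{stm} slightly more directly. What the paper's route buys in exchange is that it stays inside the quadratic-relations framework of \cite{DK15}, producing exactly the elements $f\cdot g-\ell_{f,g}$ that its proof of Theorem \ref{stm} quotes; what yours buys is uniformity and brevity, with all the remaining work concentrated in Lemma \ref{keylem}, which is where the deeper relations (\ref{Ks}), (\ref{Rs}), (\ref{T10}) actually enter.
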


\begin{proof}
We have seen in Lemma \ref{keylem2} that if $f,g\in\A$, then the statement follows. We will use this fact and 
Lemma \ref{keylem} to prove the remaining cases. (1) Assume that $f=x(i_1,j_1,t_1)\in\A$ and $g= y(i_2,j_2,k,t_2)\in\B$. By Lemma \ref{keylem2}, we see that $x(i_1,j_1,t_1)\cdot x(i_2,j_2,t_2)\in\LL$. Thus
it follows from Lemma \ref{keylem} that $f\cdot g=U_0^k\cdot x(i_1,j_1,t_1)\cdot x(i_2,j_2,t_2)\in\LL$.
(2) Assume that $f=x(i,j,t)\in\A$ and $g=z(s,k,t')\in\C$.
It follows from (\ref{Ks}) that 
\begin{eqnarray*}
z(s,k,t')&=&U_0^k\cdot\left(C_{1}^{*}\cdot U_{1}^{s}+U_{-1}^{q-s}\cdot U_{0}\cdot\sum_{i=1}^s (-1)^i{s\choose i} (U_{-1}\cdot U_{1})^{s-i}\cdot U_{0}^{(q+1)(i-1)}\right) \cdot x(0,0,t')\\
&=&C_{1}^{*}\cdot U_0^k\cdot x(0,s,t')+ \sum_{i=1}^s (-1)^i{s\choose i} U_0^{(q+1)(i-1)+(k+1)} \cdot x(q-i,s-i,t').
\end{eqnarray*}
As $x(0,s,t')\in\A$ and $x(q-i,s-i,t')\in\A$ for all $1\leqslant i\leqslant s$, it follows from Lemma \ref{keylem2} that
$x(i,j,t)\cdot x(0,s,t')\in\LL$ and $x(i,j,t)\cdot x(q-i,s-i,t')\in\LL$ for all $1\leqslant i\leqslant s$. Hence, by Lemma \ref{keylem},
we see that $f\cdot g=x(i,j,t)\cdot z(s,k,t')\in \LL$. (3) For two elements $f=y(i_1,j_1,k_1,t_1)$ and $g=y(i_2,j_2,k_2,t_2)$ in $\B$, we see that $f\cdot g=U_0^{k_1+k_2}\cdot 
x(i_1,j_1,t_1)\cdot x(i_2,j_2,t_2)\in\LL$, by applying Lemmas \ref{keylem2} and \ref{keylem}. (4) 
Assume that $f=y(i,j,k,t)\in\B$ and $g=z(s,k',t')\in\C$. 
Note that $y(i,j,k,t)=U_0^k\cdot x(i,j,t)$ with $x(i,j,t)\in\A$, and the previous second case shows that
$x(i,j,t)\cdot z(s,k',t')\in \LL$. Now Lemma \ref{keylem} applies.
(5) Assume that $f=z(s_1,k_1,t_1)$ and $g=z(s_2,k_2,t_2)$ both are in $\C$. As in the second case, we expand $f$ as follows:
$$
f=C_{1}^{*}\cdot U_0^{k_1}\cdot x(0,s_1,t_1)+ \sum_{i=1}^{s_1} (-1)^i{s_1\choose i} U_0^{(q+1)(i-1)+(k_1+1)} \cdot x(q-i,s_1-i,t_1)
$$
where $x(0,s_1,t_1)$ and each $x(q-i,s_1-i,t_1)$ are in $\A$. By the above second case and Lemma \ref{keylem}, we see that 
$f\cdot g\in\LL$.
\end{proof}

\begin{bibdiv}
  \begin{biblist}
  
  \bib{BK11}{article}{
   author={Bonnaf\'{e}, C\'{e}dric},
   author={Kemper, Gregor},
   title={Some complete intersection symplectic quotients in positive
   characteristic: invariants of a vector and a covector},
   journal={J. Algebra},
   volume={335},
   date={2011},
   pages={96--112},
   issn={0021-8693},
}

\bib{CHP91}{article}{
   author={Campbell, H. E. A.},
   author={Hughes, I.},
   author={Pollack, R. D.},
   title={Rings of invariants and $p$-Sylow subgroups},
   journal={Canad. Math. Bull.},
   volume={34},
   date={1991},
   number={1},
   pages={42--47},
}

\bib{Che14}{article}{
   author={Chen, Yin},
   title={On modular invariants of a vector and a covector},
   journal={Manuscripta Math.},
   volume={144},
   date={2014},
   number={3-4},
   pages={341--348},
   issn={0025-2611},
}
  
\bib{CW17}{article}{
   author={Chen, Yin},
   author={Wehlau, David L.},
   title={Modular invariants of a vector and a covector: a proof of a
   conjecture of Bonnaf\'{e} and Kemper},
   journal={J. Algebra},
   volume={472},
   date={2017},
   pages={195--213},
   issn={0021-8693},
}

\bib{DK15}{book}{
   author={Derksen, Harm},
   author={Kemper, Gregor},
   title={Computational invariant theory},
   series={Encyclopaedia of Mathematical Sciences},
   volume={130},
   edition={Second enlarged edition},
   publisher={Springer, Heidelberg},
   date={2015},
   pages={xxii+366},
}

  \end{biblist}
\end{bibdiv}
\raggedright
\end{document}